\newcommand{\U}{{\mathcal U}}
\newcommand{\0}{{\mathbf 0}}
\newcommand{\C}{{\mathbb C}}
\newcommand{\Z}{{\mathbb Z}}
\newcommand{\Q}{{\mathbb Q}}
\newcommand{\N}{{\mathbb N}}
\newcommand{\cL}{{\mathbb L}}
\newcommand{\strat}{{\mathfrak S}}
\newcommand{\hyp}{{\mathbb H}}
\newcommand{\Fdot}{\mathbf F^\bullet}   
\newcommand{\Pdot}{\mathbf P^\bullet}
\newcommand{\vdual}{{\mathcal D}}
\newtheorem{defn0}{Definition}[section]
\newtheorem{prop0}[defn0]{Proposition}
\newtheorem{conj0}[defn0]{Conjecture}
\newtheorem{thm0}[defn0]{Theorem}
\newtheorem{lem0}[defn0]{Lemma}
\newtheorem{corollary0}[defn0]{Corollary}
\newtheorem{example0}[defn0]{Example}
\newtheorem{remark0}[defn0]{Remark}
\newtheorem{question0}[defn0]{Question}
\newtheorem{exercise0}[defn0]{Exercise}
\newenvironment{defn}{\begin{defn0}}{\end{defn0}}
\newenvironment{thm}{\begin{thm0}}{\end{thm0}}
\newenvironment{exm}{\begin{example0}\rm}{\end{example0}}
\newenvironment{rem}{\begin{remark0}\rm}{\end{remark0}}
\newcommand{\defref}[1]{Definition~\ref{#1}}
\newcommand{\thmref}[1]{Theorem~\ref{#1}}
\newcommand{\mbf}[1]{{\mathbf #1}}
\title{A Note on Numerical Perverse Sheaves}
\subjclass[2010]{32S25, 32S15, 32S55}
\author{David B. Massey}
\date{}
\begin{document}

\begin{abstract} We define and describe the properties of a class of perverse sheaves which is very useful when the base ring is not a field.
\end{abstract}

\maketitle




\section{Introduction}

Fix a base ring $R$ that is a regular, Noetherian ring with finite Krull dimension (e.g., $\Z$,
$\Q$, or  $\C$). Let $X$ be a reduced complex analytic space.

It is common when dealing with perverse sheaves of $R$-modules on $X$ to assume that $R$ is a field. One main reason for this, perhaps {\bf the} main reason, is that, with a base field, the support and cosupport conditions for a perverse sheaf are Verdier dual to each other, and so the Verdier dual of a perverse sheaf is again a perverse sheaf. This property is false for general rings $R$. 

For instance, in the case where $X$ consists of a single point and $R=\Z$, a perverse sheaf is simply a complex which is quasi-isomorphic to a finitely-generated $\Z$-module in degree 0 and zeroes in other degrees; however if the finitely-generated $\Z$-module in degree 0 has a non-zero free part and a non-zero torsion part, then the Verdier dual complex will have non-zero cohomology in both degrees $0$ and $1$, and so is not perverse even up to a shift.

However, for many topological applications, we would like to use integral cohomology and so would like to choose our base ring to be $\Z$, which is not a field, but {\bf is} a Dedekind domain. So the question is: is there a ``nice'' class of perverse sheaves over a Dedekind domain  (we include fields as Dedekind domains) such that Verdier dualizing yields perverse sheaves and which behaves well with respect to other important functors?

\medskip

The answer is ``yes'', and we call such complexes of sheaves $\Fdot$ {\it numerical}, which in particular implies that $\Fdot$ is perverse (so, technically, in the phrase ``numerical perverse sheaf'' the word ``perverse'' is superfluous, but we included it in the title for the sake of making our context clear).

We defined and proved properties of numerical sheaves some time ago, in \cite{calcchar}, but the definition and results were in the midst of much other work, and so did not stand out as an independent topic. Hence, we are writing this note to put numerical sheaves in the spotlight.

\bigskip

\section{Definition}

We continue with a base ring $R$ that is a regular, Noetherian ring with finite Krull dimension; we will not need to require that $R$ is a Dedekind domain until we discuss Verdier dualizing numerical sheaves. The definitions and results here are closely related to \cite{MaximSchur}.

Let $\strat$ be a complex analytic Whitney stratification of $X$, with connected strata. Let $\Fdot$ be a bounded complex of sheaves of $R$-modules on $X$, which is constructible with respect to $\strat$. For each $S\in\strat$, we let $d_S:=\dim S$, and let $(\N_S, \cL_S)$ denote {\it complex Morse data for $S$ in $X$}, consisting of a normal slice and complex link of $S$ in $X$; see, for instance, \cite{stratmorse} or \cite{numcontrol}.

\medskip

\begin{defn} For each $S\in\strat$ and each integer $k$, the isomorphism-type of the finitely-generated module
$m_S^k(\Fdot):=\hyp^{k-d_S}(\N_S, \cL_S; \Fdot)$ is independent of the choice of $(\N_S, \cL_S)$; we
refer to
$m_S^k(\Fdot)$ as the {\it degree $k$ Morse module of $S$ with respect to
$\Fdot$}.
\end{defn}

\smallskip

\begin{rem}\label{rem:morsemod} The shift by $d_S$ above is present so that perverse sheaves can have non-zero Morse modules in only degree $0$. 
\end{rem}

\begin{defn}\label{def:numerical} A complex of sheaves $\Pdot$ is {\bf numerical} if and only if the only non-zero Morse modules of strata are in degree $0$ and in degree $0$ are free $R$-modules. \end{defn}

\smallskip

\noindent We usually say just that $\Pdot$ is a numerical sheaf or numerical complex.

\medskip

\begin{rem} \defref{def:numerical} is independent of the stratification, as refining a stratification with respect to which $\Pdot$ is constructible will simply lead to having extra strata with Morse modules that are zero in all degrees. 

We use the term ``numerical'' since the cohomology of the complex Morse data of a given stratum is completely determined by the single number which is the rank in degree $0$. We cannot use the perhaps more-obvious term ``free perverse sheaf'' as ``free'' has a categorical meaning.
\end{rem}

\medskip

The cohomology of the complex Morse data being concentrated in degree zero is equivalent to the complex being {\bf pure with shift $0$} (see Definition 7.5.4 of \cite{kashsch}). This is equivalent to the complex being perverse (\cite{kashsch}, 9.5.2). So we have:

\begin{thm} \textnormal{(Kashiwara and Schapira)} Numerical sheaves are perverse.
\end{thm}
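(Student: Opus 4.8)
The strategy is simply to unwind \defref{def:numerical} and recognize it as one of the standard equivalent characterizations of perversity recorded in \cite{kashsch}; there is nothing to invent here, only a translation of conventions to carry out.

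First I would observe that, by definition, if $\Pdot$ is numerical then for every stratum $S\in\strat$ and every integer $k\neq0$ one has
\[
m_S^k(\Pdot)=\hyp^{k-d_S}(\N_S,\cL_S;\Pdot)=0 .
\]
The freeness of $m_S^0(\Pdot)$ plays no role in perversity --- it is the extra condition that later makes the class stable under Verdier duality --- so it may be discarded for this statement. Thus the only hypothesis in play is that the hypercohomology of the complex Morse data of each stratum is concentrated in degree $0$, and, by the stratification-independence noted in the remark following \defref{def:numerical}, this does not depend on the choice of $\strat$.

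Next I would match this with the notion of a \emph{pure} complex in \cite{kashsch}. The hypercohomology $\hyp^{*}(\N_S,\cL_S;\Pdot)$ of the complex Morse data of $S$ computes the micro-stalk of $\Pdot$ along the conormal $T^*_SX$, and the shift by $d_S=\dm S$ built into $m_S^k$ (see \remref{rem:morsemod}) is precisely the normalization under which the vanishing $m_S^k(\Pdot)=0$ for all $k\neq0$ and all $S$ becomes the assertion that $\Pdot$ is pure with shift $0$ in the sense of Definition~7.5.4 of \cite{kashsch}; here one uses that $\strat$ is a Whitney stratification with respect to which $\Pdot$ is constructible, so that the conormals $T^*_SX$ exhaust the micro-support of $\Pdot$. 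I would then invoke Theorem~9.5.2 of \cite{kashsch}, which states that a bounded constructible complex is perverse for the middle perversity if and only if it is pure with shift $0$. Composing the two steps shows that $\Pdot$ is perverse.

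The only genuine work --- and it is bookkeeping rather than mathematics --- is the middle step: checking that the particular shift by $\dm S$ in the definition of the Morse modules is exactly the one for which ``cohomology of complex Morse data concentrated in degree $0$'' agrees with the ``pure with shift $0$'' condition of \cite{kashsch}, i.e.\ pinning down the equivalence between the complex-Morse-data description of the micro-stalk and the microlocal description used there. Once the indexing is aligned, the theorem is immediate.
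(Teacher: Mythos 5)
Your argument is exactly the paper's: the paper proves this by noting that concentration of the Morse-data cohomology in degree $0$ (the freeness being irrelevant here) is the ``pure with shift $0$'' condition of Definition~7.5.4 of \cite{kashsch}, and then citing 9.5.2 of \cite{kashsch} for the equivalence of purity with perversity. Your bookkeeping on the shift by $\dm S$ is the right (and only) point to check, so the proposal is correct and matches the paper's route.
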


\bigskip

\section{Properties}

As we wrote in the introduction, we proved all of our results on numerical sheaves in \cite{calcchar}, but they were obscured by many other results in that paper.

\medskip

\begin{thm}\label{thm:numprops} Let $f:X\rightarrow\C$ be complex analytic, and let $i:X-V(f)\hookrightarrow X$ be the (open) inclusion. Suppose that $\Pdot$ is a numerical sheaf on $X$. Then, the following sheaves are also numerical:
\begin{enumerate}
\item the nearby cycles, $\psi_f[-1]\Pdot$,
\medskip
\item  the vanishing cycles, $\phi_f[-1]\Pdot$,
\medskip
\item $i_*i^*\Pdot$, 
\medskip
\item $i_!i^!\Pdot$, and
\medskip
\item  the Verdier dual $\vdual\Pdot$, provided that $R$ is a Dedekind domain.
\end{enumerate}
\end{thm}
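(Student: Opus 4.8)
The plan is to verify each claim by translating it into a statement about Morse modules and then invoking known compatibilities between the functors $\psi_f[-1]$, $\phi_f[-1]$, $i_*i^*$, $i_!i^!$, $\vdual$ and the operation of taking complex Morse data. The key observation is that, by \defref{def:numerical} and \remref{rem:morsemod}, a complex $\Pdot$ is numerical precisely when, for every stratum $S$ (in a Whitney stratification fine enough for all the sheaves involved), the hypercohomology $\hyp^{\bullet-d_S}(\N_S,\cL_S;\Pdot)$ is concentrated in degree $0$ and free there. Since numerical sheaves are perverse, items (1)--(4) produce perverse sheaves on the nose (nearby and vanishing cycles shifted by $-1$ preserve perversity over any base, and $i_*i^*$, $i_!i^!$ preserve perversity here because $V(f)$ is a Cartier divisor so $i$ is an affine open inclusion); thus for those four items the only thing to check is \emph{freeness} of the degree-$0$ Morse modules, and the vanishing in nonzero degrees is automatic. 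This already reduces the problem to a computation of $H^0$ of a complex of free $R$-modules through a functor, which is where the work lies.

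First I would treat (1) and (2) together. The standard device is that the stalk of $\psi_f[-1]\Pdot$ (resp. $\phi_f[-1]\Pdot$) at a point, and more generally the complex Morse data of a stratum $S\subset V(f)$ with respect to these sheaves, is computed by the Milnor fibre of $f$ restricted to the relevant normal slice, via the fundamental relationship between vanishing cycles and complex links developed in \cite{numcontrol} and \cite{calcchar}. Concretely, one builds an adapted stratification of $V(f)$ and expresses $m_S^k(\psi_f[-1]\Pdot)$ and $m_S^k(\phi_f[-1]\Pdot)$ in terms of the Morse modules $m_T^\bullet(\Pdot)$ of strata $T$ of $X$ meeting a neighbourhood, together with the cohomology of a (relative) Milnor fibre. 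Because $\Pdot$ is numerical, each $m_T^\bullet(\Pdot)$ is free in degree $0$; and the Milnor-fibre cohomology that enters is that of a constant sheaf on a complex link type space, which over a regular Noetherian $R$ is again a complex of frees — and the spectral sequence / long exact sequences assembling these have free $E_\infty$ because the maps are maps of frees and, after the perversity is known, collapse into a single degree. So the degree-$0$ Morse module of $\psi_f[-1]\Pdot$, $\phi_f[-1]\Pdot$ is an extension of free modules, hence free. A cleaner route, and the one I would actually write, is simply to cite the corresponding computations from \cite{calcchar} verbatim, since the author states the present theorem is a repackaging of results there.

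Next, (3) and (4): here I would use the attaching/distinguished triangle $i_!i^!\Pdot\to\Pdot\to Rj_*j^*\Pdot\to$ (with $j:V(f)\hookrightarrow X$ the closed inclusion) and its dual, together with the identification of $j^*i_*i^*\Pdot$ and $j^!i_!i^!\Pdot$ in terms of nearby cycles — this is the usual expression of $i_*i^*$, $i_!i^!$ through $\psi_f$ and the canonical/variation maps. Since we have already shown $\psi_f[-1]\Pdot$ and $\phi_f[-1]\Pdot$ are numerical, and numerical (being perverse and ``free'') complexes are closed under the relevant extensions in the perverse category when the connecting maps are maps of free modules, (3) and (4) follow by reading off Morse modules along strata inside and outside $V(f)$. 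On strata off $V(f)$ the functors $i_*i^*$, $i_!i^!$ are the identity, so nothing changes there; on strata inside $V(f)$ one gets the Morse modules of $\psi_f[-1]\Pdot$ and $\phi_f[-1]\Pdot$ (up to the canonical and variation maps), which are free.

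Finally (5), the Verdier dual, which I expect to be the main obstacle and the only place the Dedekind hypothesis is used. Over a field, self-duality of the support/cosupport conditions makes this immediate; over a Dedekind domain it fails for general perverse sheaves (as the introduction's $\Z$-point example shows) precisely because a finitely generated module can have torsion. The point is that for a numerical sheaf all the Morse modules are \emph{free}, and the local duality statement says that $m_S^k(\vdual\Pdot)$ is built from $\operatorname{Hom}_R(m_S^{-k}(\Pdot),R)$ together with an $\operatorname{Ext}^1_R(m_S^{-k+1}(\Pdot),R)$ term; over a Dedekind domain $\operatorname{Ext}^1_R(-,R)$ of a \emph{free} module vanishes, so the torsion/shifting phenomenon disappears and $m_S^k(\vdual\Pdot)=\operatorname{Hom}_R(m_S^{-k}(\Pdot),R)$, which is $0$ for $k\neq 0$ and free for $k=0$ since the dual of a finitely generated free module over a Dedekind domain is free. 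Hence $\vdual\Pdot$ is numerical. I would carry this out by citing the local computation of Morse modules of the Verdier dual from \cite{calcchar} (or deriving it from the interchange of $\vdual$ with complex Morse data, $\vdual(\N_S,\cL_S\text{-data}) \leftrightarrow$ the dual data with a degree reversal), and then the argument is just the homological-algebra fact above about Dedekind domains; the subtlety to get right is exactly the bookkeeping of which $\operatorname{Ext}^1$ term appears and confirming it dies because the input is free — this is the crux that makes ``numerical'' the right class.
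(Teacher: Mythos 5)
The paper's own proof is exactly the ``cleaner route'' you mention at the end of your discussion of (1) and (2): it consists entirely of citations to \cite{calcchar} (item (1) is Corollary 6.4 there, item (2) is Corollary 8.4, items (3) and (4) are Corollary 7.8, and item (5) is immediate from Item 7 of Proposition 2.10). So insofar as you propose to quote \cite{calcchar} verbatim, you coincide with the paper.

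The independent sketch you give, however, has a genuine gap at precisely the point that carries all the weight, namely the \emph{freeness} of the degree-zero Morse modules in items (1)--(4) (you are right that perversity, hence concentration in degree zero, is comparatively soft). The claim that ``the Milnor-fibre cohomology that enters is that of a constant sheaf on a complex link type space, which over a regular Noetherian $R$ is again a complex of frees'' is not true: cohomology of complex links can have torsion even with constant $\Z$-coefficients, and in any case the coefficients here are $\Pdot$, not constant. Likewise ``the spectral sequence\,/\,long exact sequences \dots have free $E_\infty$ because the maps are maps of frees'' is not a valid principle: subquotients of free modules need not be free (e.g.\ $\coker(\Z\xrightarrow{2}\Z)\cong\Z/2$), and a bounded complex of finitely generated free modules whose cohomology sits in a single degree can still have torsion cohomology. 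The correct conclusion ``an extension of a free module by a free module is free'' does hold, but your sketch never supplies the identification showing that the outer terms of the relevant exact sequences are themselves free; that identification --- expressing the Morse modules of $\psi_f[-1]\Pdot$, $\phi_f[-1]\Pdot$, $i_*i^*\Pdot$, $i_!i^!\Pdot$ as explicit extensions/sums of degree-zero Morse modules of $\Pdot$ itself --- is exactly the content of Corollaries 6.4, 7.8 and 8.4 of \cite{calcchar}, and is the part one cannot wave through with generalities about free modules. A smaller correction on (5): $\operatorname{Ext}^1_R(F,R)=0$ for $F$ free over \emph{any} ring; what the Dedekind (hereditary) hypothesis buys is the universal-coefficient-type decomposition of the Morse data of $\vdual\Pdot$ into only a $\operatorname{Hom}$ term and an $\operatorname{Ext}^1$ term (Proposition 2.10(7) of \cite{calcchar}), after which your argument that the $\operatorname{Ext}^1$ term dies and the $\operatorname{Hom}$ of a free module is free is indeed the right one.
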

\begin{proof} Item 1 is Corollary 6.4 of \cite{calcchar}. Item 2 is Corollary 8.4 of \cite{calcchar}. Items 3 and 4 are Corollary 7.8 of \cite{calcchar}.

Item 5 is immediate from Item 7 of Proposition 2.10 of \cite{calcchar}.
\end{proof}

\bigskip

\section{Examples}

In this section, we give some examples, all of which are related to topological data. Hence, {\bf we fix $\Z$ as the base ring}.

\medskip

Suppose that $X$ is a local complete intersection of dimension $d$. In the literature on perverse sheaves, one frequently sees that L\^e proved that $\Z_X^\bullet[d]$ is a perverse sheaf in  \cite{levan}. In fact, what he proved in \cite{levan} is stronger; he proved that each of the complex links of each of the strata of any Whitney stratification of $X$ have the homotopy-type of a bouquet of spheres in middle dimension. In cohomological terms, this means that L\^e proved:

\begin{thm}\label{thm:lenum} \textnormal{(L\^e)} If $X$ is a local complete intersection of pure dimension $d$, then $\Z_X^\bullet[d]$ is numerical.
\end{thm}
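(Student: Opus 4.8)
The plan is to unwind \defref{def:numerical} for the single complex $\Z_X^\bullet[d]$ and to match the result against the statement of L\^e quoted above. Fix a Whitney stratification $\strat$ of $X$ with connected strata; since $X$ has pure dimension $d$, each $S\in\strat$ satisfies $d_S\le d$, with $d_S=d$ exactly on the strata contained in the regular part of $X$. For each $S$, the first step is to rewrite the Morse module, using that for the constant sheaf hypercohomology is ordinary relative cohomology:
\[
m_S^k\big(\Z_X^\bullet[d]\big)\;=\;\hyp^{k-d_S}\big(\N_S,\cL_S;\Z_X^\bullet[d]\big)\;=\;H^{\,k-d_S+d}\big(\N_S,\cL_S;\Z\big).
\]

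The second step uses the geometry of complex Morse data: the normal slice $\N_S$ is contractible, being the intersection with a small ball of a complex analytic germ, hence a real cone on the real link of $S$. Consequently the long exact sequence of the pair $(\N_S,\cL_S)$ degenerates into isomorphisms $H^{j}(\N_S,\cL_S;\Z)\cong\widetilde H^{\,j-1}(\cL_S;\Z)$ for every $j$ (valid whenever $\cL_S\ne\emptyset$), so that
\[
m_S^k\big(\Z_X^\bullet[d]\big)\;\cong\;\widetilde H^{\,k+d-d_S-1}\big(\cL_S;\Z\big).
\]
When $S$ lies in the regular part, $\N_S$ is a point and $\cL_S$ is empty; there one checks directly that $m_S^k(\Z_X^\bullet[d])=H^k(\mathrm{pt};\Z)$, which is $\Z$ for $k=0$ and $0$ otherwise.

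The third and essential step is to feed in L\^e's theorem. Since $X$ is a local complete intersection, the normal slice $\N_S$ — a sufficiently generic local linear slice of $X$ — is again a reduced local complete intersection, now of dimension $d-d_S$; and L\^e's result (as quoted) gives that $\cL_S$ has the homotopy type of a bouquet of $\mu_S$ spheres of dimension $d-d_S-1$, for some $\mu_S\ge 0$. Therefore $\widetilde H^{\,j}(\cL_S;\Z)$ is a free $\Z$-module, and it vanishes unless $j=d-d_S-1$. Setting $j=k+d-d_S-1$ in the displayed isomorphism, we conclude that $m_S^k(\Z_X^\bullet[d])=0$ for $k\ne 0$ and $m_S^0(\Z_X^\bullet[d])\cong\Z^{\mu_S}$ is free, for every stratum $S$. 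By \defref{def:numerical}, $\Z_X^\bullet[d]$ is numerical.

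I do not anticipate a genuine obstacle: L\^e's bouquet theorem is doing all the real work, and what remains is bookkeeping with the shift $[d]$ and with the cone structure of the normal slice. The one point deserving care is to confirm that the ``middle dimension'' appearing in L\^e's statement is precisely what lands the Morse modules in cohomological degree $0$ after the shift — equivalently, that the normalization recorded in \remref{rem:morsemod} is the correct one — together with the two degenerate cases (strata in the regular part, and strata for which $\mu_S=0$, where all Morse modules vanish). One could instead cite L\^e only for the germs $\N_S$ themselves and argue separately that a generic linear slice of a local complete intersection is again a local complete intersection, but since the excerpt attributes the full statement about all strata of any Whitney stratification to L\^e, the citation can be used verbatim.
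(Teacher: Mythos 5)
Your proposal is correct and follows essentially the same route as the paper, which simply presents the theorem as the cohomological restatement of L\^e's bouquet theorem for the complex links of strata of a local complete intersection; you have merely made explicit the bookkeeping (contractibility of the normal slice, the shift by $d_S$ and $[d]$, and the degenerate top-stratum case) that the paper leaves implicit.
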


\medskip

\begin{exm} Suppose that $X$ is a local complete intersection of pure dimension $d$ embedded in an open subset $\U$ of $\C^{n+1}$ and suppose that $\0\in X$. Let $f:(X, \0)\rightarrow(\C, 0)$ be complex analytic, and let $i:X-V(f)\hookrightarrow X$ be the inclusion. 

Then $\psi_f[-1]\Z^\bullet_X[d]$, $\phi_f[-1]\Z^\bullet_X[d]$, and $i_!i^!\Z^\bullet_X[d]$ are numerical by \thmref{thm:lenum} and \thmref{thm:numprops}.

Now suppose that $z_0$ is the restriction to $V(f)$ of a generic non-zero linear form on $\C^{n+1}$. Then $\Pdot:=\phi_{z_0}[-1]i_!i^!\Z^\bullet_X[d]$ is numerical and either $\0$ is not in the support  of $\Pdot$ or $\0$ is an isolated point in the support of $\Pdot$. Therefore the stalk cohomology $H^k(\Pdot)_\0$ is zero if $k\neq 0$ and $H^0(\Pdot)_\0$ is free abelian. This is of topological interest since $H^0(\Pdot)_\0\cong H^{d-1}(\cL_{X,\0}, \cL_{V(f), \0}; \,\Z)$, where $\cL$ denotes the complex link of a space at a point.

\end{exm}

\medskip

\begin{exm} Suppose that $X$ is embedded in an open subset $\U$ of $\C^{n+1}$ and suppose that $\0\in X$. Let $\Pdot$ be a numerical sheaf on $X$. Let $\mbf z:=(z_0, z_1, \dots, z_n)$ be a generic linear choice of coordinates for $\C^{n+1}$. We will not distinguish in our notation between the $z_i$'s and their restrictions to various subsets.

In Theorem 5.4  of \cite{numinvar} (and also in Theorem 10.9 of \cite{lecycles}), we proved that there is a chain complex of $\Z$-modules with the degree  $-i$ term, for $0\leq i\leq\dim_0 X$, equal to
$$
H^0(\phi_{z_i}[-1]\psi_{z_{i-1}}[-1]\dots\psi_{z_0}[-1]\Pdot)_\0
$$ 
such that the cohomology in any degree $k$ is isomorphic to $H^k(\Pdot)_\0$. We called the ranks of these $\Z$-modules the {\it characteristic polar multiplicities}, and denoted the rank of the degree $-i$ cohomology by $\lambda^i_{\Pdot, \mbf z}$.

Now, since $\Pdot$ is numerical, we now know that each $H^0(\phi_{z_i}[-1]\psi_{z_{i-1}}[-1]\dots\psi_{z_0}[-1]\Pdot)_\0$ is free abelian, and so we have a chain complex
$$
0\rightarrow\Z^{\lambda^n_{\Pdot, \mbf z}}\rightarrow\Z^{\lambda^{n-1}_{\Pdot, \mbf z}}\rightarrow\cdots\rightarrow \Z^{\lambda^{1}_{\Pdot, \mbf z}}\rightarrow \Z^{\lambda^{0}_{\Pdot, \mbf z}}\rightarrow 0
$$
whose homology/cohomology at the $\Z^{\lambda^i_{\Pdot, \mbf z}}$ term is isomorphic to $H^{-i}(\Pdot)_\0$.

\medskip

In Corollary 10.10 of \cite{lecycles}, we applied this to the case where $\Pdot=\phi_f[-1]\Z^\bullet_\U[n+1]$ for \hbox{$f:\U\rightarrow\C$} a complex analytic function and concluded, in this case, that $\lambda^i_{\Pdot, \mbf z}$ was equal to the $i$-th L\^e number of $f$ with respect to $\mbf z$ at the origin, where the L\^e numbers $\lambda^i_{f, \mbf z}$ are actually defined algebraically. However, we had to switch to $\C$ as the base ring in order to give our chain complex because we did not have the results on numerical sheaves. Now we know that in the chain complex in Corollary 10.10 of \cite{lecycles}, we can replace the $\C$'s by $\Z$'s (and we also need to correct the typographical error which has the indexing reversed on the L\^e numbers).

\end{exm}

\bigskip

\bibliographystyle{plain}

\bibliography{Masseybib}

\end{document}